\newtheorem{theorem}{Theorem}
\newtheorem{proposition}{Proposition}
\theoremstyle{definition}
\newtheorem{remark}{Remark}
\theoremstyle{plain}
\newtheorem*{TVM-curvas}{Teorema do Valor Médio (para Curvas)}
\newtheorem*{TVM-aplicacoes}{Teorema do Valor Médio (para Aplicações)}
\newcommand{\sd}{\mathcal{S}_{{\scriptscriptstyle{D}}}}
\newcommand{\bt}{\mathbf{t}}
\newcommand{\bu}{\mathbf{u}}
\newcommand{\R}{\mathbb{R} }
\newcommand{\N}{\mathbb{N} }
\newcommand{\vt}{\vspace{.1cm}}
\newcommand{\vtt}{\vspace{.2cm}}
\renewcommand{\varepsilon}{\epsilon}
\renewcommand{\tau}{\uptau}
\author[J. E. de Freitas, R. F. de Lima, and D. T. dos Santos]{Joaquim E. de Freitas, Ronaldo F. de Lima, \\ and   Daniel T. dos Santos}
\address{Departamento de Matem\'atica -- Centro de Ciências Exatas e da Terra -- Universidade Federal do Rio Grande do Norte.
Lagoa Nova -- Natal RN -- 59030-530.}
\email{jelias@ccet.ufrn.br, ronaldo@ccet.ufrn.br, mathdaniel@hotmail.com}
\title[]{The $n$-dimensional Peano Curve}
\subjclass[2010]{54C30 (primary).}
\keywords{Peano curve  -- space-filling curve -- nowhere differentiable --- Hausdorff dimension.}
\begin{document}

\maketitle

\begin{abstract}
One of the most  startling mathematical discoveries of the nineteen century was the existence of
plane-filling curves. As is well known, the first example of such a curve was given by the Italian mathematician
Giuseppe Peano  in 1890. Subsequently, other examples of plane-filling curves appeared, with  some of them
having  $n$-dimensional analogues. However, the expressions of the coordinates of the
Peano curve are not easily extendable
to arbitrary $n$ dimensions. In fact,
the only known extension of the Peano curve to an $n$-dimensional space-filling curve, made by
Stephen Milne  in 1982, is rather geometric and  makes it difficult
to establish basic properties of these curves, as continuity and nowhere differentiability, as well
as more advanced properties, as  uniform distribution of the coordinate functions.

Here, we will introduce in a completely  analytical
way the $n$-dimensional version of the  Peano curve. More precisely, for a  given integer \,$n\ge 2,$\,
we will define (by means of identities) the \,$n$\, coordinate functions of a continuous and surjective map
from a closed interval to the unit $n$-dimensional cube of \,$\R^n,$\,
which, for the particular case \,$n=2,$\, agrees with the original Peano curve. With this description, as we shall see,
one can easily  establish  all the properties we mentioned above,
and also calculate the Hausdorff dimension of the graphs of  the coordinate functions of this curve.

\end{abstract}

\section{Introduction}

At the end of the nineteenth century, mathematicians were baffled by the appearance of two kinds of
continuous maps: plane-filling curves, and nowhere differentiable functions. The first
example of a plane-filling curve was given by Peano \cite{peano}, in 1890, who introduced  a continuous and surjective map
from an interval onto a square.
Weierstrass, in 1872, provided the first known example of a nowhere differentiable continuous function.

Since Peano's curve became known, many mathematicians --- such as Hilbert, Sierpi\'nski, Lebesgue, and P\'olya ---
obtained examples of plane-filling curves (see, \emph{e.g.}, \cite{sagan}).
Consequently, questions regarding the geometrical and analytical properties of these objects
naturally arose. Many of these questions, in fact, remained unanswered for some time. For instance,
in his paper \cite{peano}, Peano announced  that the coordinate functions of his curve were nowhere
differentiable, but only in 1900 did Moore \cite{moore} give this statement  a complete proof.

The techniques involving the construction of plane-filling curves, in general,
are not easily adaptable to the $n$-dimensional case. For this reason, it also took
some time until  $n$-dimensional space-filling curves appeared.

It was around 1913 when
Hahn and Mazurkiewicz, independently, developed  a method which led to the construction of the $n$-dimensional version
of the aforementioned Lebesgue plane-filling curve\footnote{It should me mentioned that, in contrast with the Peano curve,
the Lebesgue curve is differentiable almost everywhere.}. Other examples were obtained by  Steinhaus \cite{steinhaus},
who proved that $n$-dimensional space-filling curves can be generated by stochastically independent functions.
Nevertheless, none of these curves constitutes a generalisation of the Peano curve.

In \cite{moore}, Moore approached the Peano curve geometrically, rather than analytically, as Peano did.
By adapting Moore's methods, Milne \cite{milne} was able to construct an $n$-dimensional version of the Peano curve, and
to prove that it is measure-preserving.

In this note, we will introduce  an $n$-dimensional space-filling curve which is a fairly simple
extension of the Peano curve. It will be defined by an expression that, for
the  case \,$n=2,$\, coincides with the one that defines the  Peano curve.
For this reason, it will be called the $n$-\emph{dimensional Peano curve}.

We should point out that,  being defined analytically, our $n$-dimensional version of the Peano curve is far more
simple than Milne's. This will allow us to easily establish its fundamental properties of
continuity and  surjectivety,
as well as to characterize each of its coordinate functions
as self-affine (according to K\^ono \cite{kono, kono2}).

By means of this characterization and some results in \cite{kono, kono2},
we will show that, in fact, these coordinate functions are \,$1/n$-H\"older continuous,
uniformly distributed, and  nowhere
$q$-H\"older continuous for \,$q>1/n$\, (in particular, nowhere differentiable).
We will also prove that, as a consequence of being uniformly distributed,
the graphs of these functions have
Hausdorff and packing dimensions both equal to \,$2-1/n.$

\section{Definition of the $n$-dimensional Peano Curve}

For a given integer \,$n>1,$\, let us denote by \,$[0,1]^n$\, the
$n$-dimensional block \,$[0,1]\times\cdots\times [0,1]$\, ($n$\, factors) of the Euclidean space \,$\R^n.$\,
In what follows, we will define a continuous and surjective  map \,$\alpha:[0,1]\rightarrow [0,1]^n.$

Set \,$D=\{0,1,2\}$\, and let \,$\sd$\, denote the set of all sequences in \,$D,$\, that is
$$
\sd =\{(t_k)_{k\in\N} \,;\, t_k\in D \}.
$$

Let  \,$i$\, be an integer such that \,$1\le i\le n.$\, For each \,$\bt=(t_k)_{k\in\N}\in \sd,$\,
consider the subsequence \,$\bt_i=(t_{i+jn})_{j\ge 0}$\,
and define the function \,$x_i:\sd\rightarrow[0,1]$\, by
$$
x_i(\bt)=\sum_{j=0}^{\infty}\frac{\xi^{S_{i+jn}(\bt)}(t_{i+jn})}{3^{j+1}},
$$
where \,$\xi:D\rightarrow D$\, is  the operator
\,$
\xi(a)=2-a,
$\,
and
\begin{equation} \label{eq-def1}
S_{i+jn}(\bt)=\sum_{k=1}^{i+jn}t_k-\sum_{s=0}^{j}t_{i+sn}\,, \,\,\, j\ge 0.
\end{equation}

Notice that, for fixed \,$i$\, and \,$j,$

\begin{itemize}[parsep=1ex]
\item[\Small{\bf (P1)}] \,$S_{i+jn}(\bt)$\, depends only on the first \,$(i-1)+jn$\, terms of \,$\bt,$\, $t_1\,, \dots ,t_{(i-1)+jn}$\,.

\item[\Small{\bf (P2)}]  $\xi^{S_{i+jn}(\bt)}(t_{i+jn})$\, depends only on \,$t_{i+jn}$\, and the parity of
\,$S_{i+jn}(\bt)$\,. More precisely, it  equals  \,$t_{i+jn}$\, if \,$S_{i+jn}(\bt)$\, is even, and
\,$2-t_{i+jn}$\, if \,$S_{i+jn}(\bt)$\, is odd.
\end{itemize}

Property {\Small{\bf (P1)}} is a direct consequence of the definition of \,$S_{i+jn}$\,, and property {\Small{\bf (P2)}} follows from the fact
that the operator \,$\xi$\, is an involution, that is, \,$\xi^2$\, coincides with the
identity map of \,$D.$\,

\begin{remark}
In many of our reasonings concerning the functions \,$x_i$\,, it will be convenient to
represent a given \,$\bt\in\sd$\, in
the following matrix form
$$
[\,\bt\,]=\left[
\begin{array}{ccccc}
t_1     & t_{1+n} & \dots    & t_{1+jn} &  \dots      \\
\vdots & \vdots   &          & \vdots   &   \\
t_i     & t_{i+n} & \dots    & t_{i+jn} &  \dots     \\
\vdots & \vdots   &          & \vdots   &   \\
t_n     & t_{n+n} & \dots    & t_{n+jn} &  \dots
\end{array}
\right].
$$
In this way, \,$S_{i+jn}(\bt)$\,  is the sum of all entries of
\,$[\,\bt\,]$\, from \,$t_1$\, to \,$t_{i+jn-1}$\, (first summand in \eqref{eq-def1}), minus (if \,$j>0$) the sum of the entries
which are located at the $i$-th line, on the left of \,$t_{i+jn}$\, (second summand in \eqref{eq-def1}).
\end{remark}

Define the map
$$
\begin{array}{cccc}
\Phi: & \sd            & \rightarrow & [0,1]\\
      & (t_k)_{k\in\N} & \mapsto     & \sum\frac{t_k}{3^k}
\end{array}
\vt
$$
and, for \,$t\in[0,1],$\,  call each \,$\bt\in\Phi^{-1}(t)$\, a \emph{ternary representation} of \,$t.$\,

Let us prove that, for all \,$i=1,\dots ,n,$\,
\begin{equation} \label{eq-welldefined1}
x_i(\bt)=x_i(\bu) \quad\forall\, \bt, \bu\in\Phi^{-1}(t), \,\, t\in[0,1]\,.
\end{equation}

Indeed, assuming that  \,$\bt$\, and \,$\bu$\, are distinct ternary representations of
\,$t\in [0,1],$\, we can write
\begin{itemize}[parsep=1ex]
\item $\bt=(t_1\,, \dots ,t_{(i_0-1)+j_0n}\,, t_{i_0+j_0n}, \mathbf{0})$\,,

\item $\bu=(t_1\,, \dots ,t_{(i_0-1)+j_0n}\,, t_{i_0+j_0n}-1, \mathbf{2})$\,,
\end{itemize}
where \,$t_{i_0+j_0n}\ne 0$\, and \,$\mathbf{0},$\, $\mathbf{2}$\, denote the constant
sequences equal to \,$0$\, and \,$2,$\, respectively.

Since the first \,$(i_0-1)+j_0n$\, terms of \,$\bt$\, and \,$\bu$\, coincide, it follows from properties
{\Small{\bf (P1)}} and {\Small{\bf (P2)}} that,  for a given \,$i\in\{1,\dots ,n\},$\, the following equality holds
\begin{eqnarray}\label{eq-welldefined}
x_i(\bt)-x_i(\bu) & = & \frac{\xi^{S_{i+j_0n}(\bt)}(t_{i+j_0n})-\xi^{S_{i+j_0n}(\bu)}(u_{i+j_0n})}{3^{j_0+1}}\nonumber \\
                  & + & \sum_{j=j_0+1}^{\infty}\frac{\xi^{S_{i+jn}(\bt)}(0)-\xi^{S_{i+jn}(\bu)}(2)}{3^{j+1}} \cdot
\end{eqnarray}

By considering the matrices of \,$\bt$\, and \,$\bu,$\, one easily concludes that
\,$S_{i+jn}(\bt)$\, and \,$S_{i+jn}(\bu)$\, have distinct parities in any of the following cases:
\begin{itemize}[parsep=1ex]
\item $i<i_0$\, and \,$j>j_0$\,.

\item $i>i_0$\, and \,$j\ge j_0$\,.
\end{itemize}
In particular, for all \,$i>i_0$\,, one has
$$\xi^{S_{i+j_0n}(\bt)}(t_{i+j_0n})-\xi^{S_{i+j_0n}(\bu)}(u_{i+j_0n})=
\xi^{S_{i+j_0n}(\bt)}(0)-\xi^{S_{i+j_0n}(\bu)}(2)=
0.$$
Also, from properties {\Small{\bf (P1)}}  and {\Small{\bf (P2)}},
$$\xi^{S_{i+j_0n}(\bt)}(t_{i+j_0n})-\xi^{S_{i+j_0n}(\bu)}(u_{i+j_0n})=0 \,\,\,\,\, \forall i<i_0\,.$$
These facts, together with equation \eqref{eq-welldefined}, give that
$$
x_i(\bt)=x_i(\bu) \,\, \forall i\ne i_0\,.
$$

Considering again the matrices of \,$\bt$\, and \,$\bu,$\, one sees that, for all
\,$j>j_0$\,:
\begin{itemize}[parsep=1ex]
\item $S_{i_0+jn}(\bt)=S_{i_0+j_0n}(\bt)=S_{i_0+j_0n}(\bu).$

\item $S_{i_0+jn}(\bt)$\, and \,$S_{i_0+jn}(\bu)$\, have the same parity.

\end{itemize}
However,  \,$t_{i_0+j_0n}-u_{i_0+j_0n}=1.$\, Thus,
$$
x_{i_0}(\bt)-x_{i_0}(\bu)=\frac{1}{3^{j_0+1}}-\sum_{j=j_0+1}^{\infty}\frac{2}{3^{j+1}}=0,
$$
if \,$S_{i_0+j_0n}(\bt)$\, is even, and
$$
x_{i_0}(\bt)-x_{i_0}(\bu)=-\frac{1}{3^{j_0+1}}+\sum_{j=j_0+1}^{\infty}\frac{2}{3^{j+1}}=0,
$$
if \,$S_{i_0+j_0n}(\bt)$\, is odd,
which implies
$$x_{i_0}(\bt)=x_{i_0}(\bu)$$
and completes the proof  of \eqref{eq-welldefined1}.

It follows from equality \eqref{eq-welldefined1} that, for \,$i=1,\dots ,n,$\,
the functions
$$
\begin{array}{cccc}
x_i: & [0,1] & \rightarrow & [0,1]\\
     &   t   & \mapsto     & x_i(\bt)
\end{array},
$$
$\bt\in\Phi^{-1}(t),$\, are well defined. Through them,
we will introduce in the next theorem our
intended  $n$-dimensional space-filling curve.

\begin{theorem}
The map
\begin{equation} \label{eq-definition}
\begin{array}{cccc}
\alpha: & [0,1] & \rightarrow & [0,1]^n\\
        &   t   & \mapsto     & (x_1(t),\dots ,x_n(t))
\end{array}
\end{equation}
is continuous and surjective.
\end{theorem}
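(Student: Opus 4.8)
The plan is to treat the two assertions separately; surjectivity is the quicker one, so I would dispose of it first. Fix $y=(y_1,\dots ,y_n)\in[0,1]^n$ and, for each $i$, a ternary representation $y_i=\sum_{j\ge 0}d_{i,j}/3^{j+1}$ with $d_{i,j}\in D$. The idea is to build $\bt=(t_k)_{k\in\N}\in\sd$ recursively, defining $t_k$ in the order $k=1,2,3,\dots$. Writing $k=i+jn$ with $1\le i\le n$ and $j\ge 0$, observe that in \eqref{eq-def1} the two occurrences of $t_{i+jn}$ cancel, so $S_{i+jn}(\bt)$ depends only on $t_1,\dots ,t_{k-1}$ (this is property (P1)); as these are already defined, one may set
\[
t_k:=\xi^{\,S_{i+jn}(\bt)}(d_{i,j})\in D.
\]
Because $\xi$ is an involution, this choice forces $\xi^{S_{i+jn}(\bt)}(t_{i+jn})=d_{i,j}$ for all $i,j$, hence $x_i(\bt)=\sum_{j\ge 0}d_{i,j}/3^{j+1}=y_i$; taking $t:=\Phi(\bt)$ gives $\alpha(t)=y$, so $\alpha$ is onto.

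For continuity, I would first record the estimate behind it: by properties (P1) and (P2), the $j$-th summand of the series defining $x_i$ depends only on $t_1,\dots ,t_{i+jn}$, so if $\bt,\bu\in\sd$ agree in their first $M$ entries, then all summands with $i+jn\le M$ coincide, whence
\[
|x_i(\bt)-x_i(\bu)|\le\sum_{j:\;i+jn>M}\frac{2}{3^{j+1}}=3^{-\lfloor (M-i)/n\rfloor-1},
\]
which tends to $0$ as $M\to\infty$, uniformly in $i$. It remains to transport this to $[0,1]$. Fixing $t_0\in[0,1]$ and $\varepsilon>0$, pick $J\ge 1$ with $3^{-J}<\varepsilon$ and set $M=nJ$, so that the bound above is $\le 3^{-J}$ for every $i$. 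If $3^{M}t_0\notin\Z$, then any ternary representation $(a_k)$ of $t_0$ has the property that $p:=\sum_{k\le M}a_k3^{-k}$ satisfies $p<t_0<p+3^{-M}$; hence every $t$ with $|t-t_0|<\delta:=\min\{t_0-p,\;p+3^{-M}-t_0\}$ has a ternary representation whose first $M$ entries are $a_1,\dots ,a_M$, and then the displayed estimate together with the well-definedness \eqref{eq-welldefined1} gives $|x_i(t)-x_i(t_0)|\le 3^{-J}<\varepsilon$. If $3^{M}t_0\in\Z$, then $t_0$ has a ``left'' ternary representation (eventually equal to $2$) and a ``right'' one (eventually equal to $0$); running the previous argument with the left representation for $t<t_0$ and the right one for $t>t_0$, and appealing once more to \eqref{eq-welldefined1}, gives the same conclusion on a (possibly one-sided, if $t_0\in\{0,1\}$) neighbourhood of $t_0$. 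Hence every $x_i$, and therefore $\alpha$, is continuous.

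The delicate point is exactly this last transport step: since ternary representations are not unique, one cannot simply say that $t$ close to $t_0$ forces a representation of $t$ close to one of $t_0$; near a triadic rational the two sides must be controlled through \emph{different} representations of $t_0$, and it is precisely the already-proved identity \eqref{eq-welldefined1} that glues the two one-sided estimates together. A cleaner but less self-contained alternative, which avoids the case analysis entirely, is to give $\sd$ the product topology, in which it is compact; then $\Phi:\sd\to[0,1]$ is a continuous surjection onto a Hausdorff space, hence a quotient map, and since $(x_1,\dots ,x_n):\sd\to[0,1]^n$ is continuous and, by \eqref{eq-welldefined1}, constant on the fibres of $\Phi$, it descends to the continuous map $\alpha$.
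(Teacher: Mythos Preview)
Your proof is correct and, in its main line, follows the same approach as the paper: the surjectivity argument is identical (recursively set $t_{i+jn}=\xi^{S_{i+jn}}(d_{i,j})$ and use that $\xi$ is an involution), and for continuity both you and the paper rely on the same estimate that matching initial segments force the tails of the series for $x_i$ to be small, with your case split on whether $3^{M}t_0\in\Z$ corresponding exactly to the paper's separate verification of right and left continuity. The topological alternative you sketch at the end---$\Phi$ is a continuous surjection from the compact space $\sd$ onto the Hausdorff space $[0,1]$, hence a quotient map, so by \eqref{eq-welldefined1} the continuous map $(x_1,\dots,x_n):\sd\to[0,1]^n$ descends to a continuous $\alpha$---is a genuinely different and cleaner route that the paper does not take; it replaces the explicit $\varepsilon$--$\delta$ bookkeeping around triadic rationals by a one-line appeal to general topology.
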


\begin{proof}
Let \,$t\in[0,1].$\, Given \,$i\in\{1,\dots ,n\}$\, and \,$k\in\N,$\, choose a ternary representation
\,$\bt=(t_s)_{s\in\N}\in\sd$\, of \,$t,$\, in such a way that
$$
t(k):= \sum_{s=1}^{i+kn}\frac{t_s}{3^s}\,\,\, +\sum_{s=(i+1)+kn}^{\infty}\frac{2}{3^s}>t.
$$

It is easily seen that, if \,$t\le u<t(k)$\, and \,$\bu=(u_s)_{s\in\N}$\, is a ternary representation of \,$u,$\, then
the first \,$i+kn$\, terms  of \,$\bt$\, and \,$\bu$\,  coincide.
Therefore,
$$
|x_i(t)-x_i(u)| \le  \sum_{j=k+1}^{\infty}\frac{|\xi^{S_{i+jn}(\bt)}(t_{i+jn})-\xi^{S_{i+jn}(\bu)}(u_{i+jn})|}{3^{j+1}}\le
\sum_{j=k+1}^{\infty}\frac{2}{3^{j+1}}=\frac{1}{3^{k+1}}\,,
$$
which implies that \,$x_i$\, is continuous from the right at \,$t.$\,

An analogous reasoning leads to the conclusion  that \,$x_i$\, is also continuous from the left.
Thus, each coordinate function
of \,$\alpha$\, is continuous, which implies that the map \,$\alpha$\, itself is continuous.

Now, we shall prove that  \,$\alpha$\, is surjective, that is,
for a given point \,$(x_1\,,\dots ,x_n)$\, in \,$[0,1]^n,$\, we will obtain
\,$t\in [0,1]$\, such that
\begin{equation}  \label{eq-surjectivity}
x_i(t)=x_i \,\, \forall i=1,\dots ,n\,.
\end{equation}

Given \,$i\in\{1,\dots ,n\},$\,
let \,$\mathbf{a}_i=(a_{i+jn})_{j\ge 0}\in\sd$\, be a ternary representation
of \,$x_i$\,.
Set  \,$t_1=a_1$\, and, using induction, define for all \,$i=1,\dots ,n$\,
the sequence \,$\bt_i =(t_{i+jn})_{j\ge 0}\in\sd$\,  by the equality
$$
t_{i+jn}=\xi^{S_{i+jn}(\bt_{ij})}(a_{i+jn}),
$$
where
\,$\bt_{ij}=(t_1\,, \dots , t_{(i-1)+jn}\,, \mathbf{0}).$
It follows from property {\Small{\bf (P1)}} that
$$S_{i+jn}(\bt)=S_{i+jn}(\bt_{ij}) \,\, \forall i=1,\dots ,n, \,\, j\ge 0.$$
Therefore, for all \,$i\in\{1,\dots ,n\}$\, and \,$j\ge 0,$\, one has
$$
\xi^{S_{i+jn}(\bt)}(t_{i+jn})=\xi^{S_{i+jn}(\bt)}(\xi^{S_{i+jn}(\bt_{ij})}(a_{i+jn}))=\xi^{2S_{i+jn}(\bt)}(a_{i+jn})
=a_{i+jn}\,,
$$
which implies that \,$t=\Phi(\bt)$\, satisfies \eqref{eq-surjectivity} and, so, that  \,$\alpha$\, is
surjective.
\end{proof}

The map \,$\alpha$\, defined in \eqref{eq-definition} will be called the $n$-\emph{dimensional Peano curve}.
For \,$n=2,$\, it is precisely the plane-filling  curve introduced by Peano in \cite{peano}.
In this case, for \,$i=1,2$\, and \,$j> 0,$\, the functions \,$S_{i+jn}$\, are simply:
\begin{itemize}[parsep=1ex]
\item $S_{1+2j}(\bt)=t_2+\cdots +t_{2j}$\,.

\item $S_{2+2j}(\bt)=t_1+\cdots +t_{2j+1}$\,.
\end{itemize}

So, regarding the construction of the $n$-dimensional Peano curve, our task consisted in finding
suitable functions \,$S_{i+jn}$\,, which would generalize the above
\,$S_{1+2j}$\, and \,$S_{2+2j}$\,.

\section{Properties of the Coordinate Functions of \,$\alpha$}

We now proceed to establish the  properties of the coordinate functions of the $n$-dimensional Peano curve
mentioned at the end of the introduction. They will be derived from  the main results of \cite{kono, kono2},
and  Propositions \ref{prop-selfaffine} and \ref{prop-uniformly} below.

\begin{proposition} \label{prop-selfaffine}
Given \,$k\in\N$\, and \,$1\le i\le n,$\, the $i$-th coordinate function \,$x_i$\, of
the $n$-dimensional Peano curve \,$\alpha$\, satisfies the following relation:
\begin{equation} \label{eq-hausdorff}
{3^k}(x_i(t)-x_i(t(k)))=(-1)^{\sigma(k)}x_i(3^{kn}(t-t(k))),
\end{equation}
where \,$t=\Phi(t_s)_{s\in\N} \in [0,1],$\,
\,$t(k)=\Phi(t_1,\dots, t_{kn}, \mathbf{0})$\,, and \,$\sigma(k)$\, is a nonnegative integer depending on \,$k.$
\end{proposition}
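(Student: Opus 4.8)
The plan is to compare the ternary-series expressions for $x_i(t)$ and $x_i(t(k))$ directly, using the matrix picture of a sequence $\bt$. Write $\bt=(t_s)_{s\in\N}$, so that $t(k)=\Phi(t_1,\dots,t_{kn},\mathbf 0)$ differs from $t$ only from the $(kn+1)$-st term on. For each fixed $i$, the terms of $x_i$ are indexed by $j\ge 0$ through the entries $t_{i+jn}$; the first $k$ of these (those with $0\le j\le k-1$) lie among $t_1,\dots,t_{kn}$ and hence agree for $\bt$ and for the truncation defining $t(k)$. By property \textbf{(P1)}, the quantities $S_{i+jn}$ for $0\le j\le k-1$ also agree. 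Therefore in the difference $x_i(t)-x_i(t(k))$ the first $k$ summands cancel, and only the tail $j\ge k$ survives:
\begin{equation*}
x_i(t)-x_i(t(k))=\sum_{j=k}^{\infty}\frac{\xi^{S_{i+jn}(\bt)}(t_{i+jn})-\xi^{S_{i+jn}(\mathbf{t(k)})}(0)}{3^{j+1}},
\end{equation*}
where I abbreviate by $\mathbf{t(k)}$ the truncated sequence $(t_1,\dots,t_{kn},\mathbf 0)$. Multiplying by $3^k$ reindexes this as a sum over $j'=j-k\ge 0$ with denominators $3^{j'+1}$, which already has the shape of a value $x_i(\,\cdot\,)$ of a suitable argument.

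Next I identify that argument. The number $3^{kn}(t-t(k))$ is, by construction, $\Phi$ applied to the shifted sequence $\bt'=(t_{kn+1},t_{kn+2},\dots)$ — i.e.\ the matrix of $\bt$ with its first $k$ columns deleted — provided this lies in $[0,1]$, which it does since $t\ge t(k)$ and the deleted block contributes exactly $t(k)$ after scaling; one should check $t-t(k)\le 3^{-kn}$, which is immediate because the tail $\sum_{s>kn}2/3^s=3^{-kn}$. For the shifted sequence $\bt'$, the $i$-th coordinate function picks up the entries $t'_{i+j'n}=t_{i+(j'+k)n}$, exactly the tail entries appearing above. It remains to match the exponents of $\xi$. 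Writing $S^{\bt'}_{i+j'n}$ for the $S$-function computed from $\bt'$, one has from the definition \eqref{eq-def1} that $S_{i+(j'+k)n}(\bt)$ and $S^{\bt'}_{i+j'n}$ differ by the sum of the first $kn$ entries $t_1+\cdots+t_{kn}$, a quantity \emph{independent of $i$ and $j'$}; call it $\sigma(k)$. By property \textbf{(P2)}, shifting the exponent of the involution $\xi$ by the fixed integer $\sigma(k)$ multiplies each term $\xi^{S_{i+jn}(\bt)}(t_{i+jn})$ by $(-1)^{\sigma(k)}$ relative to $\xi^{S^{\bt'}_{i+j'n}}(t'_{i+j'n})$ — because changing the parity of the exponent sends $a\mapsto 2-a$, and $2-a$ for $a\in\{0,1,2\}$ is the reflection about $1$, so within the sum $\sum 2/3^{\bullet}$ over a column this is precisely a global sign flip after recentering; more directly, $\xi^{m+1}(a)=2-\xi^m(a)$, and subtracting the constant contribution $\sum_{j'\ge 0}2/3^{j'+1}=1$ turns the reflection into negation. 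Handling this recentering cleanly — writing $\xi^m(a)=1+(-1)^m(1-a)$ and summing the constant $1$ separately — is the one place to be careful, since the naive statement "shift parity $=$ global sign" is only true after that recentering.

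So the steps, in order, are: (1) cancel the common head of the two series using \textbf{(P1)} and the agreement of the first $kn$ ternary digits; (2) multiply by $3^k$ and reindex the tail; (3) recognize $3^{kn}(t-t(k))=\Phi(\bt')$ for the column-shifted sequence $\bt'$, checking the range condition $0\le t-t(k)\le 3^{-kn}$; (4) compare $S_{i+(j'+k)n}(\bt)$ with $S^{\bt'}_{i+j'n}$, observing the difference is the $i,j'$-independent constant $\sigma(k)=\sum_{k'=1}^{kn}t_{k'}$; (5) use the identity $\xi^{m}(a)=1+(-1)^{m}(1-a)$ together with $\sum_{j'\ge 0}2/3^{j'+1}=1$ to convert the parity shift into the global factor $(-1)^{\sigma(k)}$, yielding \eqref{eq-hausdorff}. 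I expect the main obstacle to be step (5): getting the bookkeeping of the involution $\xi$ under a parity shift exactly right, since one must verify that the "$+1$" constant terms, when summed over all $j'\ge 0$, cancel identically between the two sides rather than leaving a residual constant. A secondary subtlety is that this argument is cleanest for a $\bt$ whose tail is not eventually $2$ (so that $t(k)<t$ strictly or the degenerate case $t=t(k)$ is handled by noting both sides vanish); by the already-proved well-definedness \eqref{eq-welldefined1} of $x_i$ on $[0,1]$, one is free to choose such a representation, so no generality is lost.
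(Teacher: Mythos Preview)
Your overall plan matches the paper's proof: cancel the common head, reindex the tail, identify the shifted sequence $\bt'=(t_{kn+s})_{s\in\N}$ as a ternary representation of $3^{kn}(t-t(k))$, compare the $S$-exponents, and read off the global sign from the parity of the discrepancy. The paper does exactly this, and your step (5) device $\xi^{m}(a)=1+(-1)^{m}(a-1)$ (note: you wrote the sign backwards) is equivalent to the paper's case split on the parity of $\sigma(k)$.

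There is, however, a genuine error in step (4). You assert that $S_{i+(j'+k)n}(\bt)-S^{\bt'}_{i+j'n}$ equals $t_1+\cdots+t_{kn}$, independent of $i$. This overlooks the second summand in the definition \eqref{eq-def1}: the quantity $S_{i+jn}$ is the partial sum of \emph{all} entries up to position $i+jn$ \emph{minus} the entries already seen on row $i$. When you subtract, the first summands do differ by $\sum_{q=1}^{kn}t_q$, but the second summands differ by $\sum_{r=0}^{k-1}t_{i+rn}$, the row-$i$ entries in the first $k$ columns. Hence the correct discrepancy is
\[
\sigma(k)\;=\;\sum_{q=1}^{kn}t_q\;-\;\sum_{r=0}^{k-1}t_{i+rn},
\]
which \emph{does} depend on $i$ (though not on $j'$, which is all you actually need). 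With your value of $\sigma(k)$ the parity is wrong in general --- e.g.\ for $n=2$, $k=1$, $i=1$, $(t_1,t_2)=(1,0)$ your formula gives $\sigma=1$ while the correct value is $0$ --- so the sign in \eqref{eq-hausdorff} would come out wrong. Once you fix $\sigma(k)$ as above (and the sign in the $\xi^m$ identity), your argument goes through and is essentially the paper's.
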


\begin{proof}
Writing  \,$\bt=(t_s)_{s\in\N}$\,, \,$\bt(k)=(t_1,\dots, t_{kn},\mathbf{0}),$\, and \,$\bu=(t_{s+kn})_{s\in\N}$\,,
we observe that
$$
u:=3^{kn}(t-t(k))=\Phi(\bu).
$$

Now, if we set
$$\sigma(k):=\sum_{q=1}^{kn}t_q-\sum_{r=0}^{k-1}t_{i+rn}$$
and consider  the respective matrices of \,$\bt,$\, \,$\bt(k),$\, and \,$\bu,$\, we verify that,
for all \,$i=1,\dots ,n$\, and \,$j\ge 0,$\, the following equalities hold:
$$
\sigma(k)=S_{i+(k+j)n}(\bt(k))=S_{i+(k+j)n}(\bt)-S_{i+jn}(\bu).
$$
Thus, noticing that the first \,$kn$\, terms of \,$\bt$\, and \,$\bt(k)$\, coincide, we have that
\begin{eqnarray}
3^k(x_i(t)-x_i(t(k))) & = &  \sum_{j=k}^{\infty}\frac{\xi^{S_{i+jn}(\bt)}(t_{i+jn})}{3^{j-k+1}}- \sum_{j=k}^{\infty}\frac{\xi^{S_{i+jn}(\bt(k))}(0)}{3^{j-k+1}} \nonumber\\
                      & = & \sum_{j=0}^{\infty}\dfrac{\xi^{S_{i+(j+k)n}(\bt)}(t_{i+(j+k)n})}{3^{j+1}}-\sum_{j=0}^{\infty}\dfrac{\xi^{\sigma(k)}(0)}{3^{j+1}}\nonumber\\
                      & = & \sum_{j=0}^{\infty}\dfrac{\xi^{\sigma(k)}\xi^{S_{i+jn}(\bu)}(t_{i+(k+j)n})}{3^{j+1}}-\sum_{j=0}^{\infty}\dfrac{\xi^{\sigma(k)}(0)}{3^{j+1}}\,\cdot\nonumber
\end{eqnarray}
Therefore,
$$3^k(x_i(t)-x_i(t(k)))=\sum_{j=0}^{\infty}\dfrac{\xi^{S_{i+jn}(\bu)}(t_{i+(k+j)n})}{3^{j+1}}=x_i(u),$$
if $\sigma(k)$ is even, and
$$3^k(x_i(t)-x_i(t(k)))=\sum_{j=0}^{\infty}\dfrac{2-\xi^{S_{i+jn}(\bu)}(t_{i+(k+j)n})}{3^{j+1}}-\sum_{j=0}^{\infty}\dfrac{2}{3^{j+1}}=-x_i(u),$$
if $\sigma(k)$ is odd. In any case, we have
$$3^k(x_i(t)-x_i(t(k)))=(-1)^{\sigma(k)}x_i(3^{kn}(t-t(k))),$$
as we wished to prove.
\end{proof}

Following K\^ono \cite{kono,kono2}, we  say that a function \,$f:[0,1]\rightarrow\R$\, is \emph{self-affine} with scale parameter \,$H>0$\,
to the integer base \,$r\ge 4$\, if,
for any integers \,$k, s$\, satisfying  \,$k\ge 1$\, and \,$0\le s\le r^k-1,$\,
and any  \,$h$\, satisfying \,$0\le h < r^{-k},$\, one has
$$
f(sr^{-k}+h)-f(sr^{-k})={\epsilon(k,s)}{r^{-Hk}}f(r^kh),
$$
where \,$\epsilon(k,s)\in\{-1,1\}.$\,

As pointed out in \cite{kono,kono2}, a self-affine function \,$f:[0,1]\rightarrow\R$\, is not necessarily continuous.
However, if \,$f$\, is continuous with scale parameter \,$H,$\,
then it is $H$-H\"older continuous, that is, there is a constant \,$\lambda>0,$\, such that
$$
|f(t)-f(t')|\le\lambda |t-t'|^H\,\,\, \forall t,t'\in [0,1].
$$

In this setting, if we make \,$r=3^{n},$\, and
\begin{itemize}[parsep=1ex]
\item $t=sr^{-k}+h=\Phi(t_s)_{s\in\N} $\,\,,

\item $t(k)=sr^{-k}=s3^{-kn}=\Phi(t_1\,, \dots ,t_{kn}\,, \mathbf{0})$\,,
\end{itemize}
we get from Proposition \ref{prop-selfaffine} the following result.

\begin{theorem} \label{th-selfaffine}
Any coordinate function \,$x_i$\,  of the $n$-Peano curve \,$\alpha$\, is
self-affine  with  scale parameter  \,$H=1/n$\, to the base \,$r=3^n.$\, In particular,
\,$x_i$\, is $1/n$-H\"older continuous.
\end{theorem}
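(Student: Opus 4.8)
The plan is to show that Theorem \ref{th-selfaffine} is essentially a direct translation of Proposition \ref{prop-selfaffine} into the language of K\^ono's definition of self-affinity, followed by an appeal to the continuity of $x_i$ already established in the proof of Theorem 1.

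First I would fix $r=3^n$ (which is $\ge 4$ for every $n\ge 2$, so the base requirement in the definition is met) and let $H=1/n$, so that $r^{-Hk}=3^{-k}$ and $r^k=3^{kn}$. Given integers $k\ge 1$ and $0\le s\le r^k-1$, the rational $sr^{-k}=s\,3^{-kn}$ has a finite ternary expansion using at most $kn$ ternary digits, so one may write $s3^{-kn}=\Phi(t_1,\dots,t_{kn},\mathbf 0)$ for suitable digits $t_1,\dots,t_{kn}\in D$. Then for any $h$ with $0\le h<r^{-k}=3^{-kn}$, the number $t:=sr^{-k}+h$ admits a ternary representation $\bt=(t_s)_{s\in\N}$ whose first $kn$ terms are exactly $t_1,\dots,t_{kn}$; with this choice $t(k)$ in Proposition \ref{prop-selfaffine} coincides with $sr^{-k}$, and $r^kh=3^{kn}h=3^{kn}(t-t(k))$. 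Substituting these identifications into \eqref{eq-hausdorff} yields
\[
x_i(sr^{-k}+h)-x_i(sr^{-k})=3^{-k}(-1)^{\sigma(k)}x_i(3^{kn}h)=\epsilon(k,s)\,r^{-Hk}x_i(r^kh),
\]
where $\epsilon(k,s):=(-1)^{\sigma(k)}\in\{-1,1\}$ and $\sigma(k)=\sum_{q=1}^{kn}t_q-\sum_{r=0}^{k-1}t_{i+rn}$ depends only on $k$ and the digits $t_1,\dots,t_{kn}$, hence only on $k$ and $s$. This is precisely K\^ono's defining relation, so each $x_i$ is self-affine with scale parameter $H=1/n$ to the base $r=3^n$.

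The $1/n$-H\"older continuity then follows from the general fact, quoted from \cite{kono,kono2} in the paragraph preceding the theorem, that a continuous self-affine function with scale parameter $H$ is automatically $H$-H\"older continuous; since $x_i$ was shown to be continuous in the proof of Theorem 1, we conclude that $x_i$ is $1/n$-H\"older continuous, completing the argument.

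The only genuine point needing care — the step I would flag as the main obstacle — is the well-definedness of $\epsilon(k,s)$: one must check that $\sigma(k)$ really is independent of the chosen ternary representation of $sr^{-k}$ and of the extension $\bt$ of $t$. Since $sr^{-k}$ may have two finite ternary representations (one ending in $\mathbf 0$, one ending in repeated $2$'s when we pad), and likewise $t$ may have two representations when $h$ is itself triadic, one should verify that in all admissible choices the parity of $\sigma(k)$ is the same — equivalently, that both sides of the displayed identity are unchanged. This is handled exactly as in the proof of \eqref{eq-welldefined1}: the identity \eqref{eq-welldefined1} guarantees $x_i$ is a well-defined function of $t$, and a parity bookkeeping on the matrix $[\bt]$ (of the type already carried out there) shows the sign is forced by the value $x_i(sr^{-k}+h)-x_i(sr^{-k})$ itself whenever $x_i(r^kh)\ne 0$; the case $x_i(r^kh)=0$ is vacuous since then either sign works. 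Everything else is a routine substitution into Proposition \ref{prop-selfaffine}.
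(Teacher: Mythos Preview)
Your proposal is correct and follows essentially the same route as the paper: the paper's proof of Theorem~\ref{th-selfaffine} consists precisely of setting \,$r=3^n$,\, identifying \,$t(k)=sr^{-k}$\, and \,$t=sr^{-k}+h$,\, and reading off the self-affinity relation directly from equation~\eqref{eq-hausdorff} of Proposition~\ref{prop-selfaffine}, after which the $1/n$-H\"older continuity is obtained from the quoted result of K\^ono together with the continuity of \,$x_i$\, established in Theorem~1. Your additional paragraph on the well-definedness of \,$\epsilon(k,s)$\, is more scrupulous than the paper, which simply takes the finite ternary representation of \,$sr^{-k}$\, without comment.
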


In the following,  we shall prove that the coordinate functions of the $n$-dimensional Peano curve
are uniformly distributed. With this purpose, we will consider some concepts and
results from \cite{kono2} (see also \cite{urbanski2, urbanski1}), which we will adapt to our context.

We recall that a function \,$f$\, defined in an interval \,$I\subset\R$\, is said to
be \emph{uniformly distributed} (with respect to the Lebesgue measure \,$\mu$) if,  for any measurable
set \,$A\subset\R,$\, \,$f^{-1}(A)$\, is measurable and
\,$\mu(f^{-1}(A))=\mu(A).$

Now, given a nonnegative integer \,$k,$\, set
$$
I_k:=\left[\frac{k}{3^n},\frac{k+1}{3^n}\right).
$$
Define,  for \,$i\in\{1,\dots ,n\}$\, and \,$s\in D=\{0,1,2\},$\,
$$
Q_i(s):=\{k\in [0, 3^n) \,;\, \exists\, t=\Phi(\bt)\in I_k\,, \,\xi^{S_i(\bt)}(t_i)=s\}\,,
$$
and denote the cardinality of \,$Q_i(s)$\, by \,$|Q_i(s)|.$

\begin{proposition} \label{prop-uniformly}
For all   $i\in\{1,\dots, n\},$  the function
\,$s\in D\mapsto |Q_i(s)|$\,
is constant.
\end{proposition}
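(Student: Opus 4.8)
The plan is to reduce the assertion to an elementary counting argument about base $3$ digit strings. First I would identify each integer $k\in\{0,1,\dots,3^{n}-1\}$ with the tuple $(c_1,\dots,c_n)\in D^{n}$ of its base $3$ digits, so that $k=c_1 3^{n-1}+c_2 3^{n-2}+\cdots+c_n$. For $t\in I_k$ one may choose a ternary representation $\bt=(t_s)_{s\in\N}$ of $t$ whose first $n$ entries are precisely these digits, i.e.\ $t_1=c_1,\dots,t_n=c_n$; then property (P1) gives $S_i(\bt)=\sum_{l=1}^{i-1}t_l=c_1+\cdots+c_{i-1}$, which depends on $k$ alone, and property (P2) gives $\xi^{S_i(\bt)}(t_i)=\xi^{\,c_1+\cdots+c_{i-1}}(c_i)$, which again depends on $k$ alone. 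Hence
$$
Q_i(s)=\bigl\{\,k\in[0,3^{n})\;;\;\xi^{\,c_1+\cdots+c_{i-1}}(c_i)=s\,\bigr\},
$$
and it remains only to show that this set has the same cardinality for each $s\in D$.

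To do this I would fix all digits except the $i$-th one. Fixing $c_l$ for $l\in\{1,\dots,n\}\setminus\{i\}$ amounts to $3^{n-1}$ choices, and it determines the parity of the sum $c_1+\cdots+c_{i-1}$. By property (P2), the map $\xi^{\,c_1+\cdots+c_{i-1}}$ is the identity of $D$ when this sum is even and the involution $\xi\colon a\mapsto 2-a$ when it is odd; in either case it is a bijection of $D$ onto itself. Therefore, as $c_i$ runs over $D=\{0,1,2\}$, so does $\xi^{\,c_1+\cdots+c_{i-1}}(c_i)$, bijectively; in particular there is exactly one value of $c_i$ for which $\xi^{\,c_1+\cdots+c_{i-1}}(c_i)=s$. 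Summing over the $3^{n-1}$ choices of the remaining digits yields $|Q_i(s)|=3^{n-1}$ for every $s\in D$, so $s\mapsto|Q_i(s)|$ is constant, which is the assertion.

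The one point that must be handled carefully — and the only place where I expect any friction — is the correspondence between $I_k$ and the digit tuple $(c_1,\dots,c_n)$, since the left endpoint $k/3^{n}$ of $I_k$ is a triadic rational and so admits two ternary representations. For $t$ in the interior of $I_k$ there is no difficulty: from $k<3^{n}t<k+1$ one gets $\sum_{l=1}^{n}t_l 3^{n-l}=k$ for \emph{every} ternary representation $\bt$ of $t$, so all representations of an interior point share the first $n$ digits $(c_1,\dots,c_n)$ and the quantity $\xi^{S_i(\bt)}(t_i)$ is unambiguous there; at the endpoint $k/3^{n}$ one simply selects the representation $(c_1,\dots,c_n,\mathbf{0})$, consistently with the interior. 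Once this bookkeeping is in place, the computation of $|Q_i(s)|$ is exactly the bijection count above.
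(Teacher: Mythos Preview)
Your argument is correct and, if anything, a bit more direct than the paper's. The paper establishes only that the three sets $Q_i(0),Q_i(1),Q_i(2)$ have the same cardinality, by building explicit bijections $Q_i(0)\to Q_i(1)$ and $Q_i(2)\to Q_i(1)$ of the form $k\mapsto k\pm 3^{\,n-i}$ (which amounts, in your parametrisation, to shifting the $i$-th ternary digit $c_i$ by one); the exact value $|Q_i(s)|=3^{\,n-1}$ is then obtained \emph{afterwards} from the identity $\sum_{s}|Q_i(s)|=3^{\,n}$, quoted from \cite{urbanski2}. Your route---identify $k$ with its base-$3$ digit string $(c_1,\dots,c_n)$, observe that $\xi^{S_i(\bt)}(t_i)=\xi^{\,c_1+\cdots+c_{i-1}}(c_i)$ depends on $k$ alone, and then count by freezing all digits but $c_i$---exploits the same underlying fact (that $\xi^{m}$ is a bijection of $D$) but packages it as a product count, giving $|Q_i(s)|=3^{\,n-1}$ directly without the external reference. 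Your final paragraph on the endpoint $k/3^{n}$ is the right bookkeeping: the ``alternative'' ternary representation of $k/3^{n}$ may have different leading $n$ digits, so the existential quantifier in the definition of $Q_i(s)$ must be read with the convention you describe (take the representation with leading digits $(c_1,\dots,c_n)$); the paper makes the same tacit choice, since otherwise the $Q_i(s)$ would overlap and the identity $\sum_s|Q_i(s)|=3^{n}$ used just after the proposition would fail.
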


\begin{proof}
Let us prove first that there is a bijection between \,$Q_i(0)$\, and \,$Q_i(1).$\,
Indeed,
given \,$k\in Q_i(0),$\, let
\,$\bt=(t_{s})_{s\in\N}\in\sd$\,  be such that
$$t=\Phi(\bt)\in I_k \quad\text{and}\quad  \xi^{S_i(\bt)}(t_i)=0.$$
Thus,  \,$t_i=0$\, when \,$S_i(\bt)$\, and  \,$t_i=2$\,
when \,$S_i(\bt)$\, is  odd.

Assume that \,$t_i=0$\, and  define
$$
t'=t+\frac{1}{3^i} \quad\text{and}\quad k'=k+3^{n-i}\,.
$$
Writing \,$t_s'=t_s$\, for  \,$s\ne i,$\, and  \,$t_i'=1,$\,
it is clear that \,$\bt'=(t_s')_{s\in\N}$\,
is a ternary representation of \,$t'.$\, Therefore,
\begin{equation} \label{eq-uniformdistribution}
t'\in I_{k'}=[{k'}/{3^n},(k'+1)/{3^n})\,\,\,  \text{and} \,\,\, \xi^{S_i(\bt')}(t_i')=1.
\end{equation}
Moreover, since \,$t\in I_k$\, and \,$t_i=0,$\, one has
$$
\frac{k}{3^n}\le t<\sum_{q=1}^{i-1}\frac{2}{3^q}+\frac{1}{3^i}=1-\frac{1}{3^{i-1}}+\frac{1}{3^i}\,\cdot
$$
Thus,
$$
\frac{k'}{3^n}=\frac{k+3^{n-i}}{3^{n}}<1-\frac{1}{3^{i-1}}+\frac{2}{3^{i}}=1-\frac{1}{3^{i}}\le 1-\frac{1}{3^n}\,,
\vtt
$$
which implies
\,$k'< 3^n-1.$\,
This, together with \eqref{eq-uniformdistribution}, gives that \,$k'\in Q_i(1).$\,

If \,$t_i=2, $\, we define
$$
t'=t-\frac{1}{3^i}\quad\text{and}\quad k'=k-3^{n-i}\,,
$$
and conclude, analogously, that \,$k'\in Q_i(1).$\,

Now, observe that if  \,$t=\Phi(\bt)\in I_k$\,, the hypotheses
\,$t_i=0$\, and \,$t_i=2$\, are mutually exclusive.
So, in an obvious way,
the family of intervals \,$\{I_k\}_{k\in Q_i(0)}$\, expresses itself as a disjoint
union of two of its subfamilies.
Therefore, the correspondence
$$
\begin{array}{ccc}
Q_i(0) & \rightarrow & Q_i(1)\\
k    & \mapsto     & k'=k\pm \frac{1}{3^i}
\end{array}
$$
is clearly a bijection, where the sign $+$ or $-$ is taken according to the subfamily the interval \,$I_k$\,
belongs to.

In a very similar fashion, we can construct a bijection between \,$Q_i(2)$\, and \,$Q_i(1),$\, which implies
that the function
\,$s\in D\mapsto |Q_i(s)|$\, is, in fact, constant.
\end{proof}

From the definition of the coordinate functions \,$x_i$\,  and the fact that
each of them  is continuous, self-affine,  and
satisfies \,$x_i(0)=0,$\, \,$x_i(1)=1,$\, one concludes that
equation 2.1 in \cite{urbanski2} applies and  yields
$$
\sum_{s=0}^2|Q_i(s)|=3^n\, \,\,\,\, \forall i=1,\dots ,n\,,
$$
which, together with Proposition \ref{prop-uniformly}, gives
$$
|Q_i(s)|=3^{n-1} \,\,\,\, \forall s\in D, \,\,i=1,\dots ,n\,.
$$

From this last equality and Theorem 3 of \cite{kono2}, we
obtain, as intended, the following result.

\begin{theorem} \label{th-uniformlydistributed}
Each coordinate function of the $n$-dimensional Peano curve \,$\alpha$\, is uniformly distributed.
\end{theorem}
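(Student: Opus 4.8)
The plan is to deduce the statement as an immediate consequence of K\^ono's criterion for self-affine functions, namely Theorem 3 of \cite{kono2}, once the hypotheses of that theorem have been verified. All the necessary ingredients have in fact already been assembled: by the Theorem of Section 2 each coordinate function $x_i$ is continuous; by Theorem \ref{th-selfaffine} it is self-affine with scale parameter $H=1/n$ to the base $r=3^n$; and it satisfies $x_i(0)=0$ and $x_i(1)=1$, which one reads off directly from the defining series, since the constant ternary sequences $\mathbf{0}$ and $\mathbf{2}$ are representations of $0$ and $1$ and produce $x_i=0$ and $x_i=1$, respectively. The last, and decisive, ingredient is the equidistribution $|Q_i(s)|=3^{n-1}$ for every $s\in D$, obtained just above from Proposition \ref{prop-uniformly} together with equation 2.1 of \cite{urbanski2}.

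With these facts in hand, the argument is short. First I would recall precisely the content of Theorem 3 of \cite{kono2}: for a continuous self-affine function $f\colon[0,1]\to\R$ with $f(0)=0$, $f(1)=1$, to the base $r$, the increments $f((k+1)r^{-1})-f(kr^{-1})$, $0\le k\le r-1$, are, up to sign, a list of ``digits,'' and when every admissible digit value occurs the same number of times among these $r$ increments, the function pushes Lebesgue measure on $[0,1]$ forward to Lebesgue measure on $[0,1]$; that is, $f$ is uniformly distributed in the sense of the definition recalled before Proposition \ref{prop-uniformly}. In our situation the increments of $x_i$ over the intervals $I_k$, $0\le k\le 3^n-1$, are precisely $\pm 3^{-1}s$ with $s$ the digit counted by $Q_i(s)$, and the equidistribution hypothesis is exactly $|Q_i(0)|=|Q_i(1)|=|Q_i(2)|=3^{n-1}$. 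Hence Theorem 3 of \cite{kono2} applies to each $x_i$, $i=1,\dots,n$, and yields that $x_i$ is uniformly distributed, which is the assertion.

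The genuine content of the theorem therefore lies in the preceding results rather than in this deduction, so the only point requiring care — and the step I would regard as the main obstacle — is making sure the hypotheses of \cite{kono2,urbanski2} are met verbatim by our functions. Concretely, one must check (i) that the self-affine relation furnished by Proposition \ref{prop-selfaffine} and Theorem \ref{th-selfaffine} is of exactly the form demanded there, with a single scale parameter valid on every subinterval and signs $\epsilon(k,s)\in\{-1,1\}$, and (ii) that the normalization $x_i(0)=0$, $x_i(1)=1$ together with continuity places us in the scope of equation 2.1 of \cite{urbanski2}, which is what upgrades the constancy of $s\mapsto|Q_i(s)|$ from Proposition \ref{prop-uniformly} to the sharp count $|Q_i(s)|=3^{n-1}$. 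Once this dictionary between our ternary-digit description of $x_i$ and K\^ono's formalism is in place, the conclusion is a direct citation; alternatively, avoiding the black box, one could argue by hand that the pushforward $(x_i)_*\mu$ is a probability measure on $[0,1]$ invariant under the $3^n$ contractions $t\mapsto(t+k)/3^n$ with equal weights $3^{-n}$, hence equal to Lebesgue measure by uniqueness of the associated self-similar measure, which is again uniform distribution.
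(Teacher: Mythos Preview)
Your proposal is correct and follows exactly the paper's own route: verify that each $x_i$ is continuous, self-affine to base $3^n$, and normalized by $x_i(0)=0$, $x_i(1)=1$; invoke equation 2.1 of \cite{urbanski2} together with Proposition \ref{prop-uniformly} to get $|Q_i(s)|=3^{n-1}$; and then cite Theorem 3 of \cite{kono2}. The paper presents this as the paragraph immediately preceding the theorem statement rather than as a formal proof, so your write-up is simply a more explicit rendering of the same argument.
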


It follows from the two preceding theorems  that each coordinate function \,$x_i$\, of
\,$\alpha$\,   fulfills the hypotheses of
Theorems 1 and 2 of \cite{kono}, which leads to our final result.

\begin{theorem} \label{th-fractaldimensions}
For any coordinate function \,$x_i$\, of the $n$-dimensional Peano curve \,$\alpha,$\,  the following
hold:

\begin{itemize}[parsep=1ex]

\item[\emph{i)}] For all \,$q>1/n,$\, $x_i$\, is nowhere \,$q$-H\"older continuous. In particular, \,$x_i$\, is nowhere differentiable.

\item[\emph{ii)}] The Hausdorff and packing dimensions  of the graph of \,$x_i$\, are both equal to \,$2-1/n$\,.
\end{itemize}

\end{theorem}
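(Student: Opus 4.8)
The plan is to reduce the statement entirely to Kôno's results in \cite{kono}: Theorems~1 and~2 there describe, respectively, the exact modulus of continuity and the fractal dimensions of the graph of a continuous self-affine function that is uniformly distributed. So the whole argument amounts to (a) collecting the properties of $x_i$ already established, (b) checking that they are precisely the hypotheses of \cite[Theorems 1 and 2]{kono}, and (c) transcribing the conclusions, the only genuinely new observation being the elementary implication ``nowhere $q$-Hölder'' $\Rightarrow$ ``nowhere differentiable''.

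First I would assemble the needed facts. From the proof of Theorem~1 each $x_i$ is continuous on $[0,1]$; by Theorem~\ref{th-selfaffine} it is self-affine with scale parameter $H=1/n$ to the integer base $r=3^n$, and since $n\ge 2$ we have $r=3^n\ge 9\ge 4$, so the base condition in the definition of self-affinity is satisfied; by Theorem~\ref{th-uniformlydistributed} it is uniformly distributed with respect to the Lebesgue measure; and directly from the defining identity one reads off the normalization $x_i(0)=0$ (take the ternary representation $\mathbf{0}$, for which every $S_{i+jn}$ vanishes) and $x_i(1)=1$ (take $\mathbf{2}$, for which $S_{i+jn}=2\bigl(i-1+j(n-1)\bigr)$ is even, so each term equals $2$ and the series sums to $1$). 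Thus $x_i$ is a continuous, normalized, self-affine, uniformly distributed function, which is exactly the framework of \cite{kono}.

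For item~i), Kôno's Theorem~1, applied to this $x_i$, gives that $x_i$ is nowhere $q$-Hölder continuous for every $q>H=1/n$. Nowhere differentiability then follows at once: if $x_i$ were differentiable at some $t_0\in[0,1]$, one would have $|x_i(t)-x_i(t_0)|\le C|t-t_0|\le C\delta^{\,1-q}|t-t_0|^{q}$ for $|t-t_0|\le\delta$ and any $q\in(0,1]$, so $x_i$ would be $q$-Hölder at $t_0$; choosing $q$ with $1/n<q\le 1$ — possible because $n\ge 2$ forces $1/n<1$ — contradicts the preceding sentence. For item~ii), Kôno's Theorem~2, again applied to the same $x_i$, yields that the graph of $x_i$ has Hausdorff dimension $2-H=2-1/n$; here the role of uniform distribution (Theorem~\ref{th-uniformlydistributed}) is exactly to promote the generic upper bound $2-H$ to an equality and to force the packing dimension of the graph to coincide with the Hausdorff dimension, so that both equal $2-1/n$.

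The only real work is the hypothesis-matching in the second paragraph; there is no serious obstacle, since the substantive content — continuity, the self-affine identity \eqref{eq-hausdorff}, and the counting argument behind uniform distribution — has already been carried out in the previous sections, and the present theorem is essentially their corollary via \cite{kono}.
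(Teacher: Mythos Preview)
Your proposal is correct and follows essentially the same route as the paper: the paper's ``proof'' is just the sentence preceding the theorem, namely that Theorems~\ref{th-selfaffine} and~\ref{th-uniformlydistributed} place each $x_i$ squarely in the hypotheses of Theorems~1 and~2 of \cite{kono}, whence the conclusions follow. You are simply more explicit---checking $r=3^n\ge 4$, the normalizations $x_i(0)=0$, $x_i(1)=1$, and spelling out the elementary ``nowhere $q$-H\"older $\Rightarrow$ nowhere differentiable'' step---but the strategy is identical.
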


Regarding property (i), we recall that a  function
\,$f:I\subset\R\rightarrow\R$\, is called $q$-\emph{H\"older continuous at}
\,$t\in I,$\, if  there exist \,$C, \delta>0$\,  such that, for all
\,$t'\in I$\, satisfying \,$|t-t'|<\delta,$\, the inequality
\,$
|f(t)-f(t')|\le C|t-t'|^q
$\,
holds. 

For an account  on  topological dimensions of  certain graphs, including those of coordinate
functions of space-filling curves, we refer
the reader to \cite{allaart-kawamura,kono} and the references therein.

\end{document}